\documentclass{article}
\parindent=0mm
\parskip=1mm
\usepackage{amsmath,amsthm,amssymb,amsfonts}
\usepackage[toc,page]{appendix}

\setlength{\oddsidemargin}{0mm}
\setlength{\topmargin}{-5mm}
\setlength{\textwidth}{155mm}
\setlength{\textheight}{225mm}

\newtheorem{theorem}{Theorem}[section]
\newtheorem{lemma}[theorem]{Lemma}

\numberwithin{equation}{section}

\newcommand{\eps}       {{\varepsilon}}

\newcommand{\R}         {\mathbb{R}}
\newcommand{\B}         {\mathcal{B}}

\newcommand{\X}        {X}
\newcommand{\x}        {x}

\newcommand{\y}        {y}

\newcommand{\RR}         {\mathcal{R}  }

\newcommand{\CVD}{\hfill $\rule{2.6mm}{2.6mm}$} 


\begin{document}

\begin{center}
  \LARGE
Fibered nonlinearities for $p(x)$-Laplace equations\\[5mm]
\large
Milena Chermisi and Enrico Valdinoci\\[5mm]

{\small MC: {\em    
NWF I -- Mathematik,
Universit\"at Regensburg,
D-93040 Regensburg       
Germany}
\\[5mm]
EV:
{\em
Dipartimento di Matematica,
Universit\'a di Roma Tor Vergata,
I-00133 Roma Italy}
\\[5mm]
{\tt milena.chermisi@mathematik.uni-regensburg.de}\\
{\tt enrico.valdinoci@uniroma2.it}
}\\[5mm]

\today\\[5mm]

{\small
{\em Keywords:} degenerate PDEs, geometric analysis, rigidity
and symmetry results.\\
{\em 2000 Mathematics Subject Classification:}
35J70.}
\\[9mm]

\begin{minipage}{0.8\textwidth}
{\footnotesize {\bf Abstract.}
In $\R^m\times\R^{n-m}$, endowed with
coordinates~$X=(x,y)$, we consider
the PDE
$$ -{\rm div}\, \big(
\alpha(\x) |\nabla u(\X)|^{p(x)-2}\nabla u(\X)\big)=f(x,u(\X)).$$
We prove a geometric inequality and 
a symmetry result.}\\[5mm]
\end{minipage}

{\footnotesize EV is supported by 
MIUR, project
``Variational methods and Nonlinear Differential 
Equations''.}
\\[9mm]
\end{center}

\section{Introduction}

The purpose of this paper is to give some geometric results 
on the following problem:
\begin{equation}\label{eq1-provv}
-{\rm div}\, \big(
\alpha(\x) |\nabla u(\X)|^{p(x)-2}\nabla u(\X)\big)=f(x,u(\X))
\qquad 
{\mbox{ in $\Omega$,}} 
\end{equation}
where $f=f(x,u)\in L^\infty(\R^m \times\R)$ is differentiable
in $u$ with $f_{u}\in 
L^\infty(\R)$, $\alpha\in L^\infty(\R^m)$, with $
\displaystyle\inf_{\R^m}\alpha
>0$, $p\in L^\infty(\R^m)$, with $p(x)\ge 2$
for any $x\in\R^m$, and 
$\Omega$ is an open subset of $\R^n$.
 
Here,
$u=u(\X)$, with~$\X=(\x,\y)\in\R^m\times\R^{n-m}$.

As well known, the operator in \eqref{eq1-provv}
comprises, as main example, the degenerate $p(x)$-Laplacian
(and, in particular, the degenerate $p$-Laplacian).

The motivation of this paper is the following.
In \cite{DeG}, it was asked whether or not the level
sets of bounded,
monotone,
global solutions of
\begin{equation}\label{ORI}
-\Delta u(X)=
u(X)-u^3(X)\end{equation}
for $X\in\R^n$,
are flat hyperplanes,
at least when $n\le 8$.

In spite of the marvelous progress
performed in this direction (see, in particular,
\cite{Modica, BCNPisa, GG1, GG2, AC, AAC,
savin, Delpino}), part of the conjecture
and many related problems are
still unsolved (see \cite{FVre}).

In \cite{OVcil}, the following generalization
of \eqref{ORI} was taken into account:
\begin{equation}\label{ORI2}
-\Delta u(X)= f(x,u(X)).
\end{equation}
where, as above, the notation
$X=(x,y)\in\R^m \times\R^{n-m}$ is used.

We observe that
when~$f(x,u)$ does not depend on $x$, then~\eqref{ORI2}
reduces to a usual semilinar equation,
of which \eqref{ORI} represents the chief example.

When $f(x,u)$ depends on $x$,
the dependence on the space variable of $f$
changes
only with respect to a subset of the variables,
namely the nonlinearity takes no dependence on $y$.

In particular, for fixed $u\in\R$, we have that $f(x,u)$
is constant on the
``vertical fibers'' $\{x=c\}$, and for this
the nonlinearity in \eqref{ORI2} is called
``fibered''.

Moreover, the model in \eqref{ORI2} was considered in
\cite{OVcil} as a sort of interpolation between
the classical semilinear equation in \eqref{ORI}
and the boundary reactions PDEs of \cite{CSM, SV}, which
are related to fractional power operators (see also \cite{cafS}).

The purpose of this paper is to extend the results
of \cite{OVcil} to degenerate operators of $p(x)$-Laplace type
and thus replace \eqref{ORI2} with the more
general PDE in \eqref{eq1-provv}.
Indeed, when $p(x)$ is identically equal to
$2$, \eqref{eq1-provv}
was dealt with in \cite{OVcil}. Here, further technical
difficulties arises when $p(x)>2$, due to the presence
of a degenerate operator. To overcome these
difficulties, the technique developed in \cite{SV2}
will turn out to be useful.

We recall that the
$p(x)$-Laplace equations have recently
become quite popular, in view of some important physical
applications:
see, for instance, \cite{APP87, APP92,
APP00, APP02, APP05}. 

Moreover, many analytical results
related to the $p(x)$-Laplacian operator
have been recently appeared:
see, among the others, \cite{R99, R02a, R02b, R02c, R03a,
R03, R04a, R04b, R05, R05-1, R05-2, R06a, R06b, R06c,
R06d, R06e, R06, R07-1, R07-2, R07-3, R07-4, R08a, R08b}.

For us, a weak solution of \eqref{eq1-provv} is a function 
$u$ satisfying
\begin{equation}\label{eq1} 
\int_{{\Omega}} 
\alpha(\x) |\nabla u|^{p(x)-2}\nabla u\cdot 
\nabla\xi \, d\X
=
\int_{\Omega} 
f(x,u)\xi \, d\X 
\end{equation} 
for any $\xi\in C^\infty_0(\Omega)$.

In what follows, we always assume that
\begin{equation}\label{REGO}
{\mbox{$u\in C^1(\Omega)\cap C^2(\Omega\cap
\{ \nabla u\ne0\})\cap 
L^\infty(\Omega)$
and that $\nabla u\in L^\infty(\Omega) \cap W^{1,2}_{\rm loc}(\Omega)$.
}}\end{equation}
We recall that these regularity assumptions are very mild,
and automatically
fulfilled in many cases of interest
(see, for instance, \cite{DiBenede,Tol,DaSc}
and the discussion after Theorem 1.1
in \cite{FSV}).

In the sequel, we consider the
map $\mathcal{B}: \R^m \times \big(
\R^{n} \backslash \left \{ 0 \right \} \big)
\rightarrow {\rm{Mat}}
(n\times n)$ given by
\begin{equation}
\label{BDE}
\mathcal{B}(\x,\eta)_{ij}:=
\alpha(\x) |\eta|^{p(x)-2}
\left(
\delta_{ij}+(p(x)-2) \frac{\eta_i \eta_j}{|\eta|^2}
\right)
\end{equation}
for any $1 \leq i,j \leq n$, where $ {\rm{Mat}}
(n\times n)$ denotes the space of square $(n\times n)$-matrices.

We also extend this definition by continuity, setting
$\mathcal{B}(\x,0)_{ij}:=\alpha(x)\delta_{ij}$
when $p(x)=2$ and
$\mathcal{B}(\x,0)_{ij}:=0$ when $p(x) >2$.

We remark that
\begin{equation}\label{6bis}
\frac{d}{d\varepsilon}\Big[ 
\alpha(\x) |\nabla u +\varepsilon
\nabla \varphi|^{p(x)-2} (\nabla u +\varepsilon
\nabla \varphi)\cdot \nabla \varphi \Big]_{
\varepsilon=0}=<\mathcal{B}
(\x,\nabla u) \nabla \varphi,\nabla \varphi>
\end{equation}
for any smooth test function $\varphi$,
where $<,>$ denotes
the standard scalar product in $\R^{n}$. 

In view of~\eqref{6bis}, it is natural to say that~$u$ is 
stable
if
\begin{equation}\label{sta1} 
\int_{\Omega} <\mathcal{B}(\x,\nabla u)\nabla \xi, \nabla \xi> 
-
f_{u}(x,u)\xi^2\, d\X\,\ge\,0 
\end{equation} 
for any $\xi\in C^\infty_0(\Omega)$. 

The notion of
stability given 
in~\eqref{sta1} appears naturally in the calculus 
of variations setting and it 
is usually related to minimization 
and monotonicity properties. 
In particular, \eqref{6bis} and~\eqref{sta1} 
state that the (formal) second variation 
of the energy functional associated 
to the equation has a sign (see, e.g.,~\cite{Moss, FCS, AAC, FSV} and
Lemmata \ref{79pre}
and \ref{79} here for further details). 
 
The main results we prove are a geometric formula, 
of Poincar\'e-type, given in Theorem~\ref{POIN:TH}, 
and a symmetry result, given in Theorem~\ref{SYM:TH}. 
 
For our geometric result, we need to recall 
the following notation. Fixed $x\in\R^m$ and~$c\in\R$, we 
look at the level set 
$$ S:= \{  
\y\in\R^{n-m} \, : \, 
u(\x,\y)=c 
\}.$$ 
We will consider the regular points of~$S$, 
that is, we define 
$$ L:=\{ \y\in S \, : \, 
\nabla_\y u(\y,\x)\neq 0 
\}.$$ 
Note that~$L$ depends on the~$\x\in\R^m$ 
that we have
fixed at the beginning, though we do not keep 
explicit track of this in the notation. In the same way,
$S$ has to be thought as the level set of $u$ on
the slice selected by the fixed $x$.
 
Let $\nabla_L$ to be the tangential gradient 
along~$L$, that is, for any~$\y_o\in L$ 
and any~$G:\R^{n-m}\rightarrow\R$ smooth in the vicinity of~$\y_o$, 
we set 
\begin{equation}
\label{GR} \nabla_L G(\y_o):= 
\nabla_\y G(\y_o)-\left(\nabla_\y G(\y_o)\cdot 
\frac{\nabla_\y u(\x,\y_o)}{| 
\nabla_\y u(\x,\y_o)|}\right) 
\frac{\nabla_\y u(\x, \y_o)}{| 
\nabla_\y u(\x, \y_o)|}.
\end{equation}
Since~$L$ is a smooth $(n-m-1)$-manifold, in virtue of 
the Implicit Function Theorem and \eqref{REGO},
we can define 
the principal curvatures on it, denoted by 
$$\kappa_1(\x,\y),\dots, 
\kappa_{n-m-1}(\x,\y),$$ for any~$\y\in L$. 
We will then define the total curvature 
$$ {\mathcal{K}}(\x,\y):=\sqrt{ 
\sum_{j=1}^{n-m-1} \big(\kappa_j (\x,\y)\big)^2 
}.$$ 
 
Here is the geometric formula we prove in this paper:

\begin{theorem}\label{POIN:TH}
Let  $\Omega\subseteq\R^n$ be an open set.
Assume that 
$u$ is a stable weak 
solution of~\eqref{eq1-provv} in $\Omega$
under assumption \eqref{REGO}.

Then,
\begin{equation}\label{rtyua77a7a}
\begin{split}
&\int_{ {\mathcal{R}} } \alpha(x) |\nabla u|^{p(x)-2}
\left(\mathcal{S}+
\mathcal{K}^2|\nabla_y u|^2 +|\nabla_L |\nabla_y u||^2
+\frac{(p(x)-2)}{|\nabla u|^2} \mathcal{T}\right)\phi^2
\\ &\qquad\quad\le
\int_\Omega
|\nabla_y u
|^2
<\mathcal{B}(x,\nabla u)\nabla \phi,\nabla \phi > 
\end{split}
\end{equation}
for any $\phi\in
C^\infty_0$, where
\begin{equation}\label{WR}
{\mathcal{R}}:=\{
(\x,\y)\in\Omega\subseteq\R^m\times\R^{n-m} \, : \,
\nabla_\y u(\x,\y)\neq 0
\},\end{equation}
\begin{equation}\label{D.S.}
\mathcal{S}:=
-|\nabla_x |\nabla_y u||^2+
\sum_{i=1}^m \sum_{j=1}^{n-m}(u_{x_i y_j})^2
\qquad
\quad{\mbox{
and}}\end{equation}
\begin{equation}\label{D.T.}
\mathcal{T}:=-(\nabla u \cdot \nabla |\nabla_y u 
|)^2+\sum_{j=1}^{n-m}
(\nabla u \cdot \nabla u_{y_j})^2.\end{equation}
Also
\begin{equation}\label{last claim 1}
{\mbox{$\mathcal{S}$, $\mathcal{T}\ge0$ on ${\mathcal{R}}$}}
\end{equation}
and
\begin{equation}\label{last claim 2}
\begin{split}
& {\mbox{$\mathcal{S}(X)=0$ at some $X\in\R^n$}}
\\&{\mbox{
if and only if
$\nabla_y u_{x_i}(X)$ is parallel to $\nabla_y u(X)$}}
\\ &{\mbox{
for any $i=1,\dots,m$.
}}
\end{split}\end{equation}
\end{theorem}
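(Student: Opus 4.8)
The plan is to derive the inequality by testing the stability condition \eqref{sta1} with a carefully chosen $\xi$ and then rewriting the resulting quadratic form using the PDE \eqref{eq1-provv}. The natural choice, following the classical Sternberg–Zumbrun / Sire–Valdinoci scheme adapted to fibered nonlinearities, is $\xi = |\nabla_y u|\,\phi$ for $\phi\in C^\infty_0(\Omega)$, justified first on the open set ${\mathcal{R}}$ where $\nabla_y u\neq 0$ (so that $|\nabla_y u|$ is $C^1$ there by \eqref{REGO} and the Implicit Function Theorem) and then extended by a cutoff/approximation argument near the singular set $\{\nabla_y u=0\}$, exactly as in \cite{SV2,FSV}. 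Plugging this $\xi$ into \eqref{sta1} produces, after expanding $\nabla\xi = \phi\,\nabla|\nabla_y u| + |\nabla_y u|\,\nabla\phi$, three groups of terms: a pure gradient-of-$\phi$ term $\int |\nabla_y u|^2\langle\mathcal{B}(x,\nabla u)\nabla\phi,\nabla\phi\rangle$, which is the right-hand side of \eqref{rtyua77a7a}; cross terms linear in $\nabla\phi$; and a term $\int\phi^2\big(\langle\mathcal{B}(x,\nabla u)\nabla|\nabla_y u|,\nabla|\nabla_y u|\rangle - f_u(x,u)|\nabla_y u|^2\big)$.

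The heart of the argument is to show that the $\phi^2$-coefficient dominates the quantity on the left of \eqref{rtyua77a7a}; the cross terms should be absorbed or shown to cancel. For this I would differentiate the equation \eqref{eq1-provv} with respect to each $y_j$: since $f(x,u)$ is fibered (no $y$-dependence), $u_{y_j}$ satisfies the linearized equation $\operatorname{div}\big(\mathcal{B}(x,\nabla u)\nabla u_{y_j}\big) = f_u(x,u)\,u_{y_j}$ on ${\mathcal{R}}$, with no extra lower-order term. Multiplying by $u_{y_j}\phi^2$, summing over $j$, and integrating by parts converts $\sum_j f_u(x,u)u_{y_j}^2$ into $\sum_j\langle\mathcal{B}(x,\nabla u)\nabla u_{y_j},\nabla(u_{y_j}\phi^2)\rangle$. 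Comparing this identity with the stability inequality and using the algebraic structure of $\mathcal{B}$ in \eqref{BDE} — writing $\nabla u_{y_j} = (\nabla u_{y_j}\cdot e)\,e + \text{(part along }\nabla_y u/|\nabla_y u|) + \dots$ and tracking the $(p(x)-2)$ terms — should yield \eqref{rtyua77a7a} with the geometric quantities $\mathcal{S}$, $\mathcal{T}$, the curvature term $\mathcal{K}^2|\nabla_y u|^2$, and $|\nabla_L|\nabla_y u||^2$ appearing. The identification of the curvature term relies on the standard geometric identity expressing $\sum_j|\nabla u_{y_j}|^2 - |\nabla|\nabla_y u||^2$ on a level set in terms of the second fundamental form and the tangential gradient of $|\nabla_y u|$, which gives $\mathcal{K}^2|\nabla_y u|^2 + |\nabla_L|\nabla_y u||^2$ plus the extra $x$-derivative contributions collected into $\mathcal{S}$.

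For the sign claims \eqref{last claim 1} and \eqref{last claim 2}: both $\mathcal{S}$ and $\mathcal{T}$ have the form $-(v\cdot w)^2 + \sum_k(v\cdot w_k)^2$ for a unit vector $v$ and vectors $w_k$, so nonnegativity is just the Cauchy–Schwarz / Bessel inequality $\big(v\cdot\sum(\cdots)\big)^2\le$ (appropriate sum), after noting $\nabla|\nabla_y u| = \sum_j \tfrac{u_{y_j}}{|\nabla_y u|}\nabla u_{y_j}$ so that $v\cdot\nabla|\nabla_y u|$ is a convex-type combination of the $v\cdot\nabla u_{y_j}$. For \eqref{last claim 2}, equality in this Bessel-type inequality with $v$ ranging over the $m$ coordinate directions $e_1,\dots,e_m$ forces each $\nabla_y u_{x_i}$ to be parallel to $\nabla_y u$; this is a direct linear-algebra characterization of the equality case. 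I expect the main obstacle to be the degenerate weight: justifying the test function $\xi=|\nabla_y u|\phi$ and all the integrations by parts up to the set where $\nabla u$ or $\nabla_y u$ vanishes, where $\mathcal{B}(x,\nabla u)$ degenerates and $|\nabla_y u|$ loses regularity. This is handled by the approximation/capacity argument of \cite{SV2,FSV}: one works on $\{|\nabla_y u|>\delta\}\cap\{|\nabla u|>\delta\}$, multiplies $\phi$ by a cutoff vanishing near the bad set, uses $\nabla u\in W^{1,2}_{\rm loc}$ from \eqref{REGO} to control the error terms, and lets $\delta\to0$; the contribution of the singular set vanishes because $\nabla_y u^2$ and its derivatives are integrable there.
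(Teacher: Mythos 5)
Your proposal matches the paper's proof essentially step for step: differentiate the PDE in $y_j$ to obtain the weak linearized equation, test it with $u_{y_j}\phi^2$ and sum over $j$, test the stability inequality with $\xi=|\nabla_y u|\phi$, combine so the cross terms cancel, invoke the Sternberg--Zumbrun identity for the curvature terms, and prove the sign claims by Cauchy--Schwarz. The only minor difference is in the handling of the degenerate set: rather than the $\delta$-cutoff and limit you sketch, the paper extends the test-function class to $W^{1,2}_0$ by a density argument and then uses Stampacchia's theorem to conclude $\nabla|\nabla_y u|=\nabla u_{y_j}=0$ a.e.\ on $\{\nabla_y u=0\}$, so the singular set drops out of the integrals directly.
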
 

The second result we present is a symmetry result:
 
\begin{theorem}\label{SYM:TH}
Let 
$u$ be a weak
solution of~\eqref{eq1-provv} in whole $\R^n$
under assumption \eqref{REGO} (with $\Omega:=\R^n$ in \eqref{REGO}).

Suppose that
\begin{equation}\label{Monotone}
{\mbox{$\partial_{y_1} u(X)>0$ for any $X\in\R^n$,}}
\end{equation}
and that
there exists~$C_o\geq 1$ in such a way that
\begin{equation}\label{en:bound}
\int_{B_R}  
\alpha(x)|\nabla u|^{p(x)}\,dX
\le C_o\,
R^2,\end{equation}
for any~$R\ge C_o$.

Then, there exist~$\omega\in
{\rm S}^{n-m-1}$
and~$u_o: \R^m\times \R\rightarrow\R$
such that
$$ u(x,y)=u_o(x,\omega\cdot y)$$
for any~$(x,y)\in \R^m\times\R^{n-m}$.
\end{theorem}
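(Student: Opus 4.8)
The plan is to derive the symmetry from the geometric Poincaré inequality of Theorem \ref{POIN:TH}, following the Alberti–Ambrosio–Cabré strategy adapted to the fibered, degenerate setting. First I would record that, since $\partial_{y_1}u>0$ everywhere, the set $\mathcal{R}=\{\nabla_y u\neq 0\}$ is all of $\R^n$ and the level sets of $u$ on each slice $\{x=c\}$ are regular graphs; moreover differentiating the equation shows that each directional derivative $v:=\partial_{y_e}u$, $e\in S^{n-m-1}$, is a weak solution of the linearized equation $\mathrm{div}(\mathcal{B}(x,\nabla u)\nabla v)=f_u(x,u)v$, and that $u$ is stable in the sense of \eqref{sta1} (the positive derivative $\partial_{y_1}u$ furnishes a positive supersolution of the linearized operator, which by the usual argument — see Lemmata \ref{79pre}, \ref{79} — yields stability). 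Thus Theorem \ref{POIN:TH} applies.

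Next I would insert into \eqref{rtyua77a7a} the standard family of logarithmic cutoffs: fix $\phi=\phi_R$ equal to $1$ on $B_{\sqrt R}$, supported in $B_R$, with $|\nabla\phi_R|\le C/(|X|\log R)$ on the annulus $B_R\setminus B_{\sqrt R}$. Using the ellipticity bound $0\le\langle\mathcal{B}(x,\nabla u)\xi,\xi\rangle\le C\,\alpha(x)|\nabla u|^{p(x)-2}|\xi|^2$ (valid since $p(x)\ge 2$ and $\alpha,p$ are bounded), the right-hand side of \eqref{rtyua77a7a} is controlled by
\[
C\int_{B_R\setminus B_{\sqrt R}}\alpha(x)|\nabla u|^{p(x)-2}|\nabla_y u|^2|\nabla\phi_R|^2\,dX
\le \frac{C}{(\log R)^2}\int_{B_R}\frac{\alpha(x)|\nabla u|^{p(x)}}{|X|^2}\,dX,
\]
and a dyadic decomposition together with the energy growth hypothesis \eqref{en:bound}, $\int_{B_\rho}\alpha(x)|\nabla u|^p\le C_o\rho^2$, bounds the last integral by $C\log R$. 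Hence the right-hand side is $O(1/\log R)\to 0$ as $R\to\infty$. Since the integrand on the left of \eqref{rtyua77a7a} is nonnegative — here I use \eqref{last claim 1}, the nonnegativity of $\mathcal{S}$ and $\mathcal{T}$, together with $\alpha>0$, $p(x)\ge 2$ — letting $R\to\infty$ forces
\[
\alpha(x)|\nabla u|^{p(x)-2}\Big(\mathcal{S}+\mathcal{K}^2|\nabla_y u|^2+|\nabla_L|\nabla_y u||^2+\tfrac{p(x)-2}{|\nabla u|^2}\mathcal{T}\Big)\equiv 0\quad\text{on }\R^n.
\]
Since $|\nabla u|\ge\partial_{y_1}u>0$, the factor $\alpha|\nabla u|^{p(x)-2}$ is strictly positive, so every term vanishes identically: in particular $\mathcal{K}\equiv 0$ (the level sets of $u$ on each slice are hyperplanes), $\nabla_L|\nabla_y u|\equiv 0$, and by \eqref{last claim 2} $\mathcal{S}\equiv 0$ means $\nabla_y u_{x_i}$ is parallel to $\nabla_y u$ for every $i=1,\dots,m$.

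Finally I would promote this pointwise rigidity to the stated form. From $\mathcal{K}\equiv 0$ and $\nabla_L|\nabla_y u|\equiv 0$ one gets, exactly as in the classical case, that on each slice $\{x=c\}$ the direction $\nu(x,y):=\nabla_y u/|\nabla_y u|$ is constant in $y$ and $|\nabla_y u|$ depends on $y$ only through $\nu\cdot y$; the extra relations $\nabla_y u_{x_i}\parallel\nabla_y u$ then show that $\nu$ does not depend on $x$ either, giving a single $\omega\in S^{n-m-1}$ with $\nabla_y u(x,y)=|\nabla_y u(x,y)|\,\omega$ and $\partial_{y_1}u>0$ forcing $\omega_1>0$. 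Writing $y=(\omega\cdot y)\omega+y^\perp$, one checks $\partial_{y^\perp}u\equiv 0$, so $u(x,y)=u_o(x,\omega\cdot y)$ for some $u_o:\R^m\times\R\to\R$, which is the claim. The main obstacle is the second step: one must verify that, despite the $x$-dependence of $\alpha$ and $p$ and the genuine degeneracy where $\nabla u$ could a priori be small, the cutoff computation really produces a right-hand side that is $o(1)$ — this is where the precise form of $\mathcal{B}$, the bound $p(x)\ge 2$, and the quadratic energy growth \eqref{en:bound} all have to cooperate, and where the technique of \cite{SV2} for handling the degenerate weight is needed.

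\CVD
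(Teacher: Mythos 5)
Your proposal is correct and follows essentially the same route as the paper: stability from monotonicity via Lemma~\ref{79}, the logarithmic cutoff plugged into \eqref{rtyua77a7a} together with the quadratic energy growth to annihilate the left-hand side, and then the geometric rigidity ($\mathcal{K}\equiv0$, $\nabla_L|\nabla_y u|\equiv0$ giving one-dimensionality on each slice, and $\mathcal{S}\equiv0$ via \eqref{last claim 2} forcing the direction $\omega$ to be $x$-independent, which is exactly what the paper delegates to Lemma~2.11 of \cite{FSV} and Lemma~\ref{om-co}). The only cosmetic differences are that you normalize the cutoff to $1$ on $B_{\sqrt R}$ rather than to $\log R$, and you invoke a dyadic decomposition in place of the Fubini-type Lemma~\ref{tatay} to get the $O(\log R)$ bound on $\int_{\mathcal{A}_{\sqrt R,R}}\alpha|\nabla u|^{p(x)}|X|^{-2}$; both are equivalent to what the paper does.
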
 

For explicit conditions that imply
the energy bound in \eqref{en:bound},
we refer to Appendix \ref{Motivation}
here below.

We observe that
Theorem~\ref{POIN:TH}
may be seen as a weighted
Poincar\'e 
inequality. Namely, the $L^2$-norm of
any test functions is bounded by the $L^2$-norm
of its gradient, but these norms are
taken with appropriate weights.

Remarkably, such weights have nice geometric meanings,
which make Theorem~\ref{POIN:TH}
feasible for the application in
Theorem~\ref{SYM:TH}, which
is related to the problem posed in \cite{DeG}
settled for the PDE in~\eqref{eq1-provv} instead of the one
in~\eqref{ORI}.

We recall that~\cite{SZ1, SZ2} introduced
a similar weighted Poincar\'e
inequality
in the classical uniformly elliptic
semilinear framework.
The idea of making use of Poincar\'e type
inequalities on level sets to deduce
suitable symmetries for the solutions was already in~\cite{FAR}
and it
has been also used in~\cite{CabCap, FSV}.

For related
{S}obolev-{P}oincar\'e inequalities, see \cite{Ferrari}.

We remark that results analogous to 
Theorems~\ref{POIN:TH}
and~\ref{SYM:TH}
hold, with the same proofs we present in this paper,
even for slightly more general degenerate
operators. For example,
the arguments we perform here also work
when~\eqref{eq1-provv}
is replaced by
$$
-{\rm div}\, \Big(
a \big(
\x, |\nabla u(\X)|
\big) \nabla u(\X)\Big)=f(x,u(\X))
,$$ with $0\le a\in L^{\infty}(\R^m\times [0,+\infty))$,
$\displaystyle\inf_{x\in \R^m} a(x,t)>0$
for any $t>0$ and $0\le a_t \in L^{\infty}(\R^m\times [0,+\infty))$.

The rest of the paper is devoted to the proofs 
of Theorems~\ref{POIN:TH} and~\ref{SYM:TH}, which
will be given in Sections~\ref{Section 1}
and~\ref{Section 2} respectively.
The paper ends with an Appendix, which contains some
auxiliary lemmata, some comments on when
conditions~\eqref{Monotone}
and~\eqref{en:bound} are satisfied, and explicit examples
of smooth, global, bounded solutions of~\eqref{eq1-provv}.

\section{Proof of Theorem \ref{POIN:TH}}\label{Section 1}
 
By~\eqref{BDE}, we have that 
\begin{equation}\label{26bis}
\begin{split}
&\int_{\Omega} \alpha(x)|
\nabla u|^{p(x)-2}\nabla u \cdot \Psi_{y_j}
\\&
=-\int_{\Omega } \left(
\alpha(x)|\nabla u|^{p(x)-2} \nabla u_{y_j} 
\cdot \Psi + (p(x)-2)\alpha(x)|\nabla u|^{p(x)-2} 
\frac{\nabla u \cdot \nabla u_{y_j}}{|\nabla u|^2} \nabla u \cdot \Psi
\right) \\
&
=-\int_{\Omega } <\mathcal{B}(x,\nabla u) \nabla u_{y_j}, \Psi >. 
\end{split}\end{equation}
for any~$j=1,\dots, n-m$ and any~$\Psi\in C^\infty_0 
(\Omega, \R^{n-m})$. 
 
The use of~\eqref{eq1} and~\eqref{26bis} with~$\Psi:=\nabla\psi$
yields
\begin{equation}\label{a711aa} 
\begin{split} 
&
\int_{\Omega} f_{u}(x,u)  u_{\y_j} \psi=
\int_{\Omega} (f(x,u))_{y_j}\psi
=-\int_{\Omega} f(x,u)\psi_{y_j}=
-\int_\Omega \alpha(x) |\nabla u|^{p(x)-2}
\nabla u \cdot \nabla \psi_{y_j} 
\\&\qquad\qquad=\int_{\Omega  } 
<\mathcal{B}(\x,\nabla u) \nabla u_{\y_j}, 
\nabla \psi >
\end{split} 
\end{equation} 
for any~$j=1,\dots, n-m$ and any~$\psi\in 
C^\infty_0(\Omega)$. 

Actually, 
\begin{equation}\label{dense}
{\mbox{
\eqref{a711aa} holds for any 
$\psi \in W^{1,2}_0(\Omega)$.}}\end{equation}
To prove \eqref{dense}, we perform a density argument
(which may be skipped by the expert reader).
Namely, we take $K$ to be a compact subset of $\Omega$,
$\psi \in 
W^{1,2}_0(K)$ and a sequence $\psi_\eps\in C^\infty_0(K)$
approaching $\psi$ in the $W^{1,2}$-norm.

We observe that, from \eqref{REGO},
there exists $C_K\ge1$ such that
\begin{equation}\label{OM}
\sup_{X\in K} |f_u(x,u(X))|+|\nabla_y u(X)|
+ |\mathcal{B} (x,\nabla u(X))| \le C_K.
\end{equation}
Furthermore, $\mathcal{B}$ is nonnegative definite.

Consequently, by
Cauchy-Schwarz
inequality,
\begin{equation}\label{DD1}
\begin{split}
& \left| \int_\Omega <\mathcal{B}(x,\nabla u) \nabla u_{y_j},
\nabla (\psi-\psi_\eps) >\right| 
\\ &\qquad\le
\sqrt{\int_K
<\mathcal{B}(x,\nabla u) \nabla u_{y_j},
\nabla u_{y_j} > 
}\,
\sqrt{\int_K
<\mathcal{B}(x,\nabla u) \nabla(\psi-\psi_\eps)
,\nabla (\psi-\psi_\eps)> }
\\ &\qquad\le
C_K^2 \, \sqrt{|K|}\,
\| \nabla(\psi-\psi_\eps)\|_{L^2(K)}
.
\end{split}
\end{equation}
Moreover,
\begin{equation}\label{DD2}
\left|\int_\Omega f_u (x,u) u_{y_j}(\psi-\psi_\eps)
\right|
\le C_K^2\,\int_K |\psi-\psi_\eps|
\le C_K^2\,\sqrt{|K|}\, \| \psi-\psi_\eps\|_{L^2(K)}.
\end{equation}
Then, \eqref{dense} plainly follows from \eqref{DD1}
and \eqref{DD2}.

We also claim that
\begin{equation}\label{dense2}
{\mbox{
\eqref{sta1} holds for any
$\xi \in W^{1,2}_0(\Omega)$.}}\end{equation}
The proof of \eqref{dense2} is analogous to the one
of \eqref{dense} and its reading may be omitted by
the expert
readers. The details of the proof of \eqref{dense2}
consist in taking a compact subset $K$ of $\Omega$,
a function $\xi\in W^{1,2}_0(K)$,
and a sequence $\xi_\eps\in C^\infty_0(K)$ which approaches
$\xi$ in the $W^{1,2}$-norm.

Then, using \eqref{OM} once more,
\begin{eqnarray*}
&& \left| \int_{\Omega} \Big( <\mathcal{B}(\x,\nabla u)\nabla \xi, \nabla 
\xi>-
<\mathcal{B}(\x,\nabla u)\nabla \xi_\eps, \nabla \xi_\eps>\Big)\right|
\\ &&\qquad +\left|\int_\Omega
\big( f_{u}(x,u)(\xi^2 -\xi_\eps^2)\big)
\right|
\\
&\le& \left| \int_{K} \Big( <\mathcal{B}(\x,\nabla u)\nabla (
\xi-\xi_\eps), 
\nabla
\xi>+
<\mathcal{B}(\x,\nabla u)\nabla \xi_\eps, \nabla 
(\xi-\xi_\eps)>\Big)\right|
\\ &&\qquad +C_K\, \int_K
\big( |\xi+\xi_\eps| |\xi-\xi_\eps| \big)
\\ &\le& 4 C_K 
\,\big( 1+
\|\xi\|_{W^{1,2}(K)}
\big)
\|\xi-\xi_\eps\|_{W^{1,2}(K)}
,\end{eqnarray*}
for $\eps$ small, and this proves \eqref{dense2}. 

{F}rom \eqref{REGO} and
\eqref{dense}, we may take~$\psi:=u_{y_j} 
\phi^2$ in \eqref{a711aa},
where~$\phi\in C^\infty_0(\Omega)$. 

So, we obtain
\begin{equation}\label{3.2bis}
\begin{split}
& 0= \int_{\Omega}
\left[<\mathcal{B}(x,\nabla u) \nabla u_{y_j},
\nabla u_{y_j} > \phi^2+ 
<\mathcal{B}(x,\nabla u) \nabla u_{y_j}, \nabla \phi^2> u_{y_j}\right]\\
& =\int_{\Omega} f_{u}(x,u) u_{y_j}^2 \phi^2.
\end{split}
\end{equation}

Now, we notice that, by \eqref{REGO} and Stampacchia's
Theorem (see, e.g., Theorem~6.19
in~\cite{LOSS}), 
\begin{equation}\label{ASTA}\begin{split}
& \nabla |\nabla_y u| =0=\nabla u_{y_j}
\\
&{\mbox{for a.e. $\x\in\R^m$ and a.e. $y\in \R^{n-m}$
such that~$\nabla_y u(x,y)=0$.}}
\end{split}\end{equation}

By~\eqref{WR},
\eqref{3.2bis} and~\eqref{ASTA}, we obtain
\begin{equation*}
0= \int_{\RR} 
\left[<\mathcal{B}(x,\nabla u) \nabla u_{y_j},
\nabla u_{y_j} > \phi^2+ 
<\mathcal{B}(x,\nabla u) \nabla u_{y_j}, \nabla \phi^2> u_{y_j}\right]+
\int_{\Omega} f_{u}(x,u) u_{y_j}^2 \phi^2. 
\end{equation*} 

We now sum over $j=1,...,n$ to get (dropping, for short,
the dependences of $\mathcal{B}$) and
we obtain
\begin{equation} \label{78iddudududuudaa}
-\int_{\RR} \left[\sum_{j=1}^n <\mathcal{B}\nabla u_{y_j}, \nabla u_{y_j} >
\phi^2- \frac{1}{2}<\mathcal{B} \nabla |\nabla_y u|^2, \nabla \phi^2>
\right] =\int_{\Omega} f_{u}(x,u) |\nabla_y u|^2 \phi^2. 
\end{equation} 

Now, we recall \eqref{dense2}
and we choose~$\xi:=|\nabla_y u|\phi$
in~\eqref{sta1}, obtaining
\begin{equation*}\begin{split}
& 0 \leq \int_{\RR}\Big[ <\mathcal{B}
\nabla |\nabla_y u|, \nabla |\nabla_y u|> \phi^2 
+ <\mathcal{B} \nabla \phi, \nabla \phi> |\nabla_y u|^2\Big.\\
& \Big. +2 <\mathcal{B}\nabla |\nabla_y u| ,
\nabla \phi> |\nabla_y u| \phi \Big]
+ \int_{\Omega} f_{u}(x,u) |\nabla_y u|\phi^2,
\end{split} 
\end{equation*}
where~\eqref{ASTA} has been used once more.

This and~\eqref{78iddudududuudaa}
imply that 
\begin{equation}
\label{s88818181} 
0 \leq \int_{\RR}\Big[ <\mathcal{B} \nabla |\nabla_y u|, 
\nabla |\nabla_y u|> \phi^2 +
<\mathcal{B} \nabla \phi, \nabla \phi> |\nabla_y u|^2 
-\sum_{j=1}^n <\mathcal{B}\nabla u_{y_j}, \nabla u_{y_j} > \phi^2
\Big]. 
\end{equation} 

By using \eqref{BDE} and~\eqref{s88818181}, we are lead
to the following inequality:
\begin{equation}\label{31jkl}
\begin{split}
& 0 \leq \int_{\mathcal{R}} \Big\{
\alpha(x)|\nabla u|^{p(x)-2} \phi^2 
\Big [|\nabla |\nabla_y u||^2
-\sum_{j=1}^{n-m}  |\nabla u_{y_j}|^2\Big ]+
< \mathcal{B} \nabla \phi, \nabla \phi> |\nabla_y 
u|^2 \Big.\\
&\quad\quad \Big.
+\frac{(p(x)-2)\alpha(x)|\nabla u|^{p(x)-2} \phi^2}{|\nabla u|^2} 
\Big [(\nabla u \cdot \nabla |\nabla_y 
u|)^2 -\sum_{j=1}^{n-m} (\nabla u \cdot \nabla u_{y_j})^2 \Big ]\Big\}
.\end{split}
\end{equation}

We denote $\mathcal{S}$ and $\mathcal{T}$ as in \eqref{D.S.}
and \eqref{D.T.}. 

We also set
$$ \mathcal{U}:= \big| \nabla|\nabla_y u| \big|^2-\sum_{j=1}^{
n-m}|\nabla u_{y_j}|^2.$$
Making use of formula~(2.1)
of~\cite{SZ1}, we have that, on~${\mathcal{R}}$,
\begin{equation*}
\mathcal{U}+\mathcal{S}=
\big| \nabla_y |\nabla_y u| \big|^2-\sum_{i,j=1}^{
n-m} (u_{y_i y_j})^2
=
-(\mathcal{K}^2
|\nabla_y u|^2+|\nabla_L |\nabla_y u||^2).
\end{equation*}
Accordingly,~\eqref{31jkl}
becomes
\begin{eqnarray}
\label{yuiooaoo}
\nonumber
0 \leq \int_{ {\mathcal{R}} } \Big\{
\alpha(x)|\nabla u|^{p(x)-2} \phi^2 \Big (-\mathcal{S}
-(\mathcal{K}^2|\nabla_y u|^2 +|\nabla_L |\nabla_y u||^2)\Big )\Big.\\
\Big.
-\frac{(p(x)-2)\alpha(x)|\nabla u|^{p(x)-2}}{
|\nabla u|^2} \mathcal{T}\phi^2
+<\mathcal{B}\nabla \phi,\nabla \phi > |\nabla_y u |^2\Big\},\nonumber
\end{eqnarray}
and this gives \eqref{rtyua77a7a}.

Furthermore,
if we set
$$ \zeta_j :=\nabla u\cdot \nabla u_{y_j}
\qquad{\mbox{ for $j=1,\dots, n-m$,}}$$
and
$$ \zeta:= (\zeta_1,\dots,\zeta_{n-m})\in \R^{n-m},$$
we have that, on ${\mathcal{R}}$,
\begin{equation}\label{-LC1}
\begin{split}
&  -{\mathcal{T}} =
\left( \sum_{\ell=1}^n \partial_\ell u
\partial_\ell
|\nabla_y u|\right)^2-|\xi|^2 =
\left( \sum_{\ell=1}^n \partial_\ell 
u
\frac{\nabla_y u}{|\nabla_y u|}\cdot \nabla_y \partial_\ell u
\right)^2-|\xi|^2\\
&\qquad =\left(
\frac{\nabla_y u}{|\nabla_y
u|}\cdot \xi
\right)^2-|\xi|^2
\le 0,\end{split}
\end{equation}
thanks to
Cauchy-Schwarz
inequality.

Analogously, for any $i=1,\dots,m$, on ${\mathcal{R}}$,
\begin{equation}\label{-LC0}
\big|\partial_{x_i}|\nabla_y u| \big|
=
\left|\frac{
\nabla_y u}{|\nabla_y u|}\cdot \nabla_y u_{x_i}
\right|\le
|\nabla_y u_{x_i}|
=\sqrt{\sum_{j=1}^{n-m} (u_{x_i y_j})^2}
,
\end{equation}
and
\begin{equation}\label{-LC2}
{\mbox{equality holds in \eqref{-LC0} if and
only if $\nabla_y u_{x_i}$ is parallel
to $\nabla_y u$. }}
\end{equation}
Therefore, from \eqref{-LC0},
\begin{eqnarray*}
&& -\mathcal{S} =
|\nabla_x |\nabla_y u||^2-
\sum_{i=1}^m \sum_{j=1}^{n-m}(u_{x_i y_j})^2
\\ &&\qquad\quad
=\sum_{i=1}^m \big(\partial_{x_i} |\nabla_y u|\big)^2
-
\sum_{i=1}^m \sum_{j=1}^{n-m}(u_{x_i y_j})^2
\le 0.
\end{eqnarray*}
This, \eqref{-LC1} and \eqref{-LC2} give
\eqref{last claim 1} and \eqref{last claim 2},
thus completing the proof of 
Theorem~\ref{POIN:TH}.~\CVD 
 
\section{Proof of Theorem \ref{SYM:TH}}\label{Section 2}
 
{F}rom \eqref{Monotone}
and Lemma \ref{79}, we have that $u$ is stable.
Therefore, the assumptions of Theorem \ref{POIN:TH}
are implied by the ones of
Theorem~\ref{SYM:TH}.

Given~$\rho_1\le\rho_2$, we define 
\begin{equation}\label{-A}
{\mathcal{A}}_{\rho_1,\rho_2}:=\{ 
X\in\R^n \, : \,
|X|\in [\rho_1,\rho_2] 
\}.\end{equation}
 
{F}rom \eqref{en:bound} and Lemma~\ref{tatay}, applied here with
$$h(X):=\alpha(x)|\nabla u|^{p(x)},$$
we obtain
\begin{equation}\label{7s77s88} 
\int_{{\mathcal{A}}_{\sqrt R, R}}\frac{\alpha(x)
|\nabla u|^{p(x)} 
}{ |X|^2} \leq C_1\log R 
\end{equation} 
for a suitable~$C_1>0$, if~$R$ is big. 
 
Now we define 
$$ \phi_R(X):=\left\{ 
\begin{matrix} 
\log R & {\mbox{ if $|X|\le \sqrt R$,}}\\ 
2\log\big( R/|X|\big) 
& {\mbox{ if $\sqrt R<|X|< R$,}} 
\\ 
0 & {\mbox{ if $|X|\ge R$}} 
\end{matrix} 
\right.$$ 
and we observe that 
$$
|\nabla\phi_R|\leq \frac{C_2\,\chi_{ 
{\mathcal{A}}_{\sqrt R, R} 
}}{|X|},$$
for a suitable~$C_2>0$. 

Moreover,
employing \eqref{BDE} and Cauchy-Schwarz
inequality, 
$$ \big| < {\mathcal{B}}(x,\nabla u(x))w, w>\big|
\,\le
\,
\alpha(x)\,(p(x)-1)\,|\nabla u(x)|^{p(x)-2}|w|^2
\qquad
{\mbox{
for all $w\in\R^n$.}}$$

Thus, plugging~$\phi_R$ in \eqref{rtyua77a7a}
and recalling \eqref{last claim 1},
we see that
\begin{eqnarray*} 
&& (\log R)^2\int_{B_{\sqrt{R}}\bigcap 
{\mathcal{R}} } \left[ 
\alpha(x) |\nabla u|^{p(x)-2}
\left(\mathcal{S}+
\mathcal{K}^2|\nabla_y u|^2 +|\nabla_L |\nabla_y u||^2
\right)\right]
\\&&\qquad\qquad\,\leq\,C_3 
\int_{ 
{\mathcal{A}}_{\sqrt R, R} 
}\frac{ 
\alpha(x)|\nabla u|^{p(x)-2}
|\nabla_y u|^2}{|X|^2} 
\end{eqnarray*} 
for large~$R$.

Hence, we divide by~$(\log R)^2$, 
we use~\eqref{7s77s88} 
and we send~$R\rightarrow+\infty$. In this way,
we obtain
that ${\mathcal{S}}$,
${\mathcal{K}}$ and $\big| 
\nabla_L |\nabla_y u| 
\big|$ vanish identically
on~${\mathcal{R}}$. 
 
Then, 
by Lemma~2.11 of~\cite{FSV} (applied to the function~$y\mapsto 
u(x,y)$, for any fixed~$x\in \R^m$),
we obtain that
there exist $\omega:\R^m\rightarrow{\rm S}^{n-m-1}$
and $u_o:\R^m\times\R\rightarrow\R$ such that
$u(x,y)=
u_o (x,
\omega(x)\cdot y)$
for any $(x,y)\in\R^m\times\R^{n-m}$.

{F}rom \eqref{last claim 2}
and Lemma \ref{om-co}, we deduce that $\omega$
is constant, and this ends the proof of
Theorem~\ref{SYM:TH}.~\CVD 

\begin{appendices}

\section{Auxiliary lemmata}

\begin{lemma}\label{om-co}
Let $u\in C^2 (\R^n)$, with
\begin{equation}\label{6w812312}
\big\{ (x,y)\in \R^m\times\R^{n-m}
\, : \,
\nabla_y u(x,y)=0\big\}
=\emptyset.\end{equation}

Let also $\omega:\R^m\rightarrow{\rm S}^{n-m-1}$
and $u_o:\R^m\times\R\rightarrow\R$.

Suppose that
\begin{equation}\label{67}
u(x,y)=
u_o (x,
\omega(x)\cdot y)
\end{equation}
for any $(x,y)\in\R^m\times\R^{n-m}$.

Assume also that
\begin{equation}\label{86}
{\mbox{$\nabla_y u_i$ is parallel to $\nabla_y u$}}
\end{equation}
for any $i=1,\dots,m$ and any $(x,y)\in\R^n\times\R^{n-m}$.

Then, $\omega$ is constant.
\end{lemma}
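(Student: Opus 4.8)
The plan is to differentiate the representation formula~\eqref{67} with respect to the $x$-variables and use the parallelism assumption~\eqref{86} to force the coefficients $\partial_{x_i}\omega$ to vanish. Write $\omega=\omega(x)=(\omega_1(x),\dots,\omega_{n-m}(x))$ and introduce the scalar $t:=\omega(x)\cdot y$, so that $u(x,y)=u_o(x,t)$. First I would compute, for fixed $j\in\{1,\dots,n-m\}$,
$$
u_{y_j}(x,y)=\partial_t u_o(x,t)\,\omega_j(x),
$$
so that $\nabla_y u(x,y)=\partial_t u_o(x,t)\,\omega(x)$; by~\eqref{6w812312} we have $\nabla_y u\neq 0$ everywhere, hence $\partial_t u_o(x,t)\neq 0$ at every relevant point, and $\nabla_y u$ points (up to sign) in the direction $\omega(x)$, which is independent of $y$. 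Next I would differentiate in $x_i$: since $\partial_{x_i}t=(\partial_{x_i}\omega(x))\cdot y$, the chain rule gives
$$
\nabla_y u_{x_i}(x,y)=\partial_t u_o(x,t)\,\partial_{x_i}\omega(x)
+\Big(\partial_{x_i}\partial_t u_o(x,t)+\partial_t^2 u_o(x,t)\,(\partial_{x_i}\omega(x))\cdot y\Big)\,\omega(x).
$$

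Now the key step: assumption~\eqref{86} says $\nabla_y u_{x_i}$ is parallel to $\nabla_y u$, i.e.\ parallel to $\omega(x)$. Decomposing the displayed expression into its component along $\omega(x)$ and its component in the orthogonal complement $\omega(x)^\perp$, the orthogonal component is exactly $\partial_t u_o(x,t)\,\Pi_{\omega(x)^\perp}\big(\partial_{x_i}\omega(x)\big)$, where $\Pi_{\omega(x)^\perp}$ is the orthogonal projection onto $\omega(x)^\perp$. The parallelism forces this orthogonal component to be zero, and since $\partial_t u_o\neq 0$ we conclude $\Pi_{\omega(x)^\perp}\big(\partial_{x_i}\omega(x)\big)=0$, i.e.\ $\partial_{x_i}\omega(x)$ is parallel to $\omega(x)$. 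But $\omega$ takes values in the unit sphere $\mathrm{S}^{n-m-1}$, so differentiating $|\omega(x)|^2=1$ gives $\omega(x)\cdot\partial_{x_i}\omega(x)=0$; thus $\partial_{x_i}\omega(x)$ is both parallel and orthogonal to $\omega(x)$, hence $\partial_{x_i}\omega(x)=0$. As this holds for every $i=1,\dots,m$ and every $x$, the map $\omega$ is constant.

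I expect the only delicate point to be bookkeeping the regularity: $u\in C^2$ and $\nabla_y u\neq 0$ everywhere, so $\partial_t u_o=\nabla_y u\cdot\omega/1$ is continuous and nonvanishing, and $\omega(x)=\nabla_y u(x,y)/|\nabla_y u(x,y)|$ for any choice of $y$, which shows $\omega$ is $C^1$ in $x$ and legitimizes all the differentiations above (alternatively one may observe that for fixed $x$ one can pick $y$ so that~\eqref{67} is locally invertible in $t$, but the formula $\omega=\nabla_y u/|\nabla_y u|$ is cleaner). With that regularity in hand the argument is just the two-line linear-algebra decomposition into the $\omega(x)$ and $\omega(x)^\perp$ directions, so there is no real analytic obstacle.
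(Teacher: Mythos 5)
Your proof follows essentially the same route as the paper's: differentiate the representation in $x_i$, use the assumed parallelism of $\nabla_y u_{x_i}$ and $\nabla_y u$ to conclude that $\partial_{x_i}\omega$ lies in the direction of $\omega$, and combine with $\omega\cdot\partial_{x_i}\omega=0$ (from $|\omega|\equiv 1$) to get $\partial_{x_i}\omega=0$. The one point where the two arguments genuinely diverge is the bookkeeping that legitimizes the differentiations, and here your sketch is not airtight. First, $\omega(x)=\nabla_y u/|\nabla_y u|$ is only true up to a sign, since $\nabla_y u=(\partial_t u_o)\,\omega$ and nothing forces $\partial_t u_o>0$; without an argument that the sign is locally constant, this does not yet give $\omega\in C^1$. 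Second, your chain-rule expansion of $\nabla_y u_{x_i}$ uses $\partial_{x_i}\partial_t u_o$ and $\partial_t^2 u_o$, but the hypotheses give $u\in C^2$, not that $u_o$ is $C^2$ (or even differentiable) in $t$; the fact that these extra terms project away does not excuse invoking them. The paper sidesteps both issues at once by never differentiating $u_o$: it writes $\nabla_y u(x,y)=c(x,y)\,\omega(x)$ for an a priori merely scalar $c$, observes $c^2=|\nabla_y u|^2\in C^1$ and $c\ne 0$ to conclude $c\in C^1$, hence $\omega=\nabla_y u/c\in C^1$, and then differentiates the identity $\nabla_y u=c\,\omega$ directly to get $\nabla_y u_{x_i}=c_{x_i}\omega+c\,\omega_{x_i}$. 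From there the algebra is the same as yours (parallelism gives $c_{x_i}\omega+c\,\omega_{x_i}=k^{(i)}\omega$, dotting with $\omega_{x_i}\perp\omega$ yields $c\,|\omega_{x_i}|^2=0$). So the idea is correct and matches the paper, but you should replace the heuristic passage through $\partial_t u_o$ and $\partial_t^2 u_o$ by the paper's $c(x,y)$ device, which makes the regularity claims actually follow from $u\in C^2$.
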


\begin{proof}
To start, we claim that
\begin{equation}\label{68}
{\mbox{$\nabla_{y} u(x,y)$
is parallel to $\omega(x)$.}}
\end{equation}
To check this, we
let $\eta(x)\in
{\rm S}^{n-m-1}$ be orthogonal to $\omega(x)$
and we use \eqref{67} to get that
$$ u(x,y+t\eta(x))=u_o(x,\omega(x)\cdot y).$$
Therefore, by differentiating
with respect to $t$, 
$$ \nabla_{y} u(x,y)\cdot \eta(x)=0.$$
This proves \eqref{68}.

{F}rom \eqref{68}, we now write
\begin{equation}\label{89}
\nabla_{y} u(x,y)=c(x,y) \omega(x),\end{equation}
for some $c(x,y)\in \R$.

In fact, from \eqref{6w812312} and \eqref{89},
\begin{equation}\label{890}
{\mbox{$c(x,y)\ne0$ for all $(x,y)\in\R^n$.}}
\end{equation}
Also, from~\eqref{89},
\begin{equation}\label{GA}
{\mbox{the map $(x,y)\mapsto c(x,y)\omega(x)$
belongs to $C^1(\R^n)$.}}
\end{equation}
Hence,
\begin{equation}\label{TTT}
\Big( c(x,y) \omega(x)\Big)_i =\nabla_{y} u_i(x,y),
\end{equation}
for any $1\le i\le n$.

Since
$$ c^2 (x,y) = \big(c(x,y)\omega(x)\big)\cdot
\big(c(x,y)\omega(x)\big),$$
we deduce from~\eqref{GA}
that $c^2\in C^1(\R^n)$.

Thus, from~\eqref{890}, 
\begin{equation}\label{CDI}
c\in C^1(\R^n).\end{equation}

This, \eqref{89} and \eqref{890} imply that
\begin{equation}
\label{ODI}
\omega\in C^1(\R^m).
\end{equation}
So,
\begin{equation}\label{NO}
0= \left(\frac12\right)_i =\left( \frac{\omega(x)
\cdot\omega(x)}{2}\right)_i=
\omega_i(x)\cdot\omega(x),
\end{equation}
for any $1\le i\le m$.

Furthermore, by \eqref{89},
\eqref{86} and \eqref{68}, we have that
\begin{equation}\label{6a77111}
\Big( c(x,y) \omega(x)\Big)_i = 
\Big( \nabla_y u(x,y)\Big)_i=\nabla u_i(x,y)=
k^{(i)}(x,y) \omega(x),
\end{equation}
for some $k^{(i)}(x,y)\in\R$.

Then, making use of~\eqref{NO} twice,
we deduce from \eqref{6a77111} that
$$ 0= k^{(i)}(x,y) \omega(x)\cdot
\omega_i(x)=
\Big( c(x,y) \omega(x)\Big)_i \cdot \omega_i(x)
=c(x,y) \omega_i(x)
\cdot \omega_i(x)=
c(x,y) |\omega_i(x)|^2
,
$$
for any $1\le i\le m$.

Consequently, from \eqref{890}, we conclude
that $\omega_i (x)=0$ for any $1\le i\le m$.
\end{proof}

We remark that the result in Lemma \ref{om-co}
is, in general, false without condition \eqref{6w812312}.
To see this, let us consider the following example.
Let $m=1$, $n=3$, $\tau\in C^\infty(\R)$, with $\tau(x)=0$
for any $x\in [-1,1]$ and $\tau(x)>0$ for any $x\in\R\setminus[-1,1]$.

Let also $\omega\in C^\infty(\R,{\rm S}^1)$ be such that $\omega(x)=
(1,0)$ for any $x\le-1/2$ and $\omega(x)=(0,1)$ for any $x\ge1/2$.

Let $\gamma\in C^\infty(\R)$, and set
$$ u_o(x,r):= \tau(x) \gamma(r), \qquad
{\mbox{
for any $(x,r)\in\R\times\R$, and}}$$
$$ u(x,y):=\tau(x) \gamma(\omega(x)\cdot y)
,\qquad
{\mbox{
for any $(x,y)\in\R\times\R^2$.}}$$
Then, \eqref{67} holds true.

Moreover,
\begin{equation}\label{its}
\nabla_y u(x,y)=\gamma'(\omega(x)\cdot y) \tau(x)\,\omega(x).
\end{equation}
We also observe that
\begin{eqnarray*}
\partial_x \big( \tau(x)\omega(x)\big)
&=& \left\{
\begin{matrix}
(0,0) & {\mbox{ if $x\in (-1,1)$,}}\\
\tau'(x) \,(1,0) & {\mbox{ if $x\le-1$,}}\\
\tau'(x) \,(0,1)
& {\mbox{ if $x\ge 1$}}\\
\end{matrix}
\right. \\&=&
\tau'(x)\,\omega(x).
\end{eqnarray*}
As a consequence,
\begin{eqnarray*}\nabla_y u_1 (x,y)&=&
\gamma'(\omega(x)\cdot y) \tau'(x)\,\omega(x)+
\gamma''(\omega(x)\cdot y) (\omega'(x)\cdot
y)\tau(x)\,\omega(x)
\\ &=& \Big(
\gamma'(\omega(x)\cdot y) \tau'(x)+
\gamma''(\omega(x)\cdot y) (\omega'(x)\cdot
y)\tau(x)
\Big)\,\omega(x).\end{eqnarray*}
That is, $\nabla_y u_1$ is parallel to $\omega$
and so, by \eqref{its}, we have that \eqref{86}
holds true.

But \eqref{6w812312} and the claim of Lemma \ref{om-co} are not
satisfied.

\begin{lemma}\label{tatay}
Let the notation in \eqref{-A} hold.
 
Let~$R>0$ 
and~$h:B_R\subset\R^n \rightarrow\R$ be a nonnegative 
measurable function.  
 
For any~$\rho\in (0,R)$,
let 
$$ \eta(\rho):=2\int_{B_{\rho}} h(X)\,dX.$$ 
Then, 
$$\int_{{\mathcal{A}}_{\sqrt R, R}}\frac{h(X)}{|X|^2}\,dX 
\leq \int_{\sqrt R}^R t^{-3}\eta(t)\,dt+\frac{\eta(R)}{R^2}. 
$$ 
\end{lemma}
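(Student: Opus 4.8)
The plan is to prove Lemma~\ref{tatay} by a Fubini/layer‑cake argument, writing the weight $|X|^{-2}$ as an integral of indicator functions over the radial variable and then integrating by parts in the resulting one‑dimensional integral.

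First I would use the elementary identity
\begin{equation*}
\frac{1}{|X|^2}=\frac{1}{R^2}+\int_{|X|}^{R} \frac{2}{t^3}\,dt
\qquad\text{for }X\in{\mathcal{A}}_{\sqrt R,R},
\end{equation*}
which holds since $|X|\le R$ on ${\mathcal{A}}_{\sqrt R,R}$. Multiplying by $h(X)\ge0$ and integrating over ${\mathcal{A}}_{\sqrt R,R}$ gives
\begin{equation*}
\int_{{\mathcal{A}}_{\sqrt R, R}}\frac{h(X)}{|X|^2}\,dX
=\frac{1}{R^2}\int_{{\mathcal{A}}_{\sqrt R, R}}h(X)\,dX
+\int_{{\mathcal{A}}_{\sqrt R, R}}h(X)\left(\int_{|X|}^{R}\frac{2}{t^3}\,dt\right)dX .
\end{equation*}
The first term is bounded by $R^{-2}\int_{B_R}h=\tfrac{1}{2}R^{-2}\eta(R)\le R^{-2}\eta(R)$ (in fact one gets the sharper $\tfrac12$, but the stated bound is weaker, so this is fine). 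For the second term I would apply Tonelli's theorem to swap the order of integration: the condition $|X|\le t$ together with $X\in{\mathcal{A}}_{\sqrt R,R}$ means $X\in B_t\cap{\mathcal{A}}_{\sqrt R,R}\subseteq B_t$, hence
\begin{equation*}
\int_{{\mathcal{A}}_{\sqrt R, R}}h(X)\left(\int_{|X|}^{R}\frac{2}{t^3}\,dt\right)dX
=\int_{\sqrt R}^{R}\frac{2}{t^3}\left(\int_{\{X\in{\mathcal{A}}_{\sqrt R,R}:\,|X|\le t\}}h(X)\,dX\right)dt
\le\int_{\sqrt R}^{R}\frac{2}{t^3}\left(\int_{B_t}h(X)\,dX\right)dt
=\int_{\sqrt R}^{R} t^{-3}\eta(t)\,dt .
\end{equation*}
Adding the two bounds yields exactly the claimed inequality.

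I do not expect a genuine obstacle here; the only point requiring a little care is the justification of the interchange of integrals (Tonelli applies because the integrand $h(X)\,t^{-3}\mathbf{1}_{\{|X|\le t\le R\}}$ is nonnegative and measurable on ${\mathcal{A}}_{\sqrt R,R}\times(\sqrt R,R)$) and the bookkeeping of which ball the inner spatial integral is taken over — enlarging $\{X\in{\mathcal{A}}_{\sqrt R,R}:|X|\le t\}$ to $B_t$ is where the nonnegativity of $h$ is used and where the definition of $\eta$ enters. An alternative, essentially equivalent, route is to set $g(\rho):=\int_{B_\rho}h$, note $g$ is nondecreasing and absolutely continuous so that $\int_{{\mathcal{A}}_{\sqrt R,R}}|X|^{-2}h = \int_{\sqrt R}^R \rho^{-2}\,dg(\rho)$ as a Stieltjes integral, and integrate by parts; but the layer‑cake version above avoids any discussion of differentiability of $g$.
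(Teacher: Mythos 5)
Your proposal is correct and follows essentially the same argument as the paper: write $|X|^{-2}=R^{-2}+\int_{|X|}^R 2t^{-3}\,dt$, swap the order of integration by Fubini/Tonelli, and bound the inner spatial integral by $\int_{B_t}h$. The only difference is expository — you spell out the justification of the interchange and note the factor-of-two slack in the last term — so this is the paper's proof.
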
 

\begin{proof} 
The argument we give here is a modification of the ones
on page~24 of~\cite{Simon} and page~403 of~\cite{GT}.

By Fubini's Theorem,
\begin{equation}
\begin{split}
& \int_{{\mathcal{A}}_{\sqrt R, R}}\frac{h(X)}{|X|^2}\,dX
=
\int_{{\mathcal{A}}_{\sqrt R, R}}
h(X)\left( \int_{|X|}^R 2t^{-3}\,dt +R^{-2}\right)
\,dX\\
&\qquad= 2 \int_{\sqrt{R}}^R \int_{{\mathcal{A}}_{\sqrt R, t}}
t^{-3} h(X)\,dX\,dt+
R^{-2}\int_{{\mathcal{A}}_{\sqrt R, R}}
h(X)\,dX
\\ &\qquad\le
\int_{\sqrt{R}}^R t^{-3}\eta(t)\,dt
+R^{-2}\eta(R).
\qedhere\end{split}\end{equation}
\end{proof}

\section{Motivating assumptions \eqref{sta1} and \eqref{en:bound}}\label{Motivation}

For $t_0\in \R$ fixed, we set
\begin{equation}\label{dF}
F(x,t):=\int_{t_0}^t f(x,s) \, ds.
\end{equation}
Given an open set $\Omega\subseteq \R^n$ , we define
\begin{equation*}\label{en_p}
{\mathcal{E}}_\Omega(v):=\int_\Omega 
\frac{\alpha(x) |\nabla u (\X)|^{p(x)}}{p(x)} - F(x,u(\X)) \, d\X.
\end{equation*}
It is well known that u is a local minimizer if for any bounded open 
set $U\subset \Omega$ we have ${\mathcal{E}}_U(u)$ is well-defined and finite,
and $${\mathcal{E}}_U(u)(u+\phi)\geq {\mathcal{E}}_U(u)$$
for any $\phi\in C^\infty_0(U)$.

\begin{lemma}\label{79pre}
Let $u$ be a local minimizer in some domain $\Omega$.
Then $u$ satisfies 
\eqref{eq1} and \eqref{sta1}.
\end{lemma}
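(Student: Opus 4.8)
The plan is to verify the two assertions separately: that a local minimizer is a weak solution of \eqref{eq1}, and that it is stable in the sense of \eqref{sta1}. Both follow from the classical recipe of differentiating the energy $\mathcal{E}_U$ along a one-parameter family $u+\varepsilon\varphi$ with $\varphi\in C^\infty_0(U)$, using that $\varepsilon=0$ is a minimum, so the first derivative vanishes and the second derivative is nonnegative. The role of the structural hypotheses on $f$ and $\alpha$ (boundedness, $f_u\in L^\infty$, $\inf\alpha>0$, $p(x)\ge 2$) together with \eqref{REGO} is to justify that these derivatives exist and can be computed by differentiating under the integral sign.

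First I would fix a bounded open $U\Subset\Omega$ and $\varphi\in C^\infty_0(U)$, and consider $g(\varepsilon):=\mathcal{E}_U(u+\varepsilon\varphi)$. The integrand splits into the degenerate part $\alpha(x)|\nabla u+\varepsilon\nabla\varphi|^{p(x)}/p(x)$ and the potential part $-F(x,u+\varepsilon\varphi)$. For the potential part, differentiation in $\varepsilon$ is immediate since $F$ is $C^1$ in its second argument (indeed $C^2$, as $f_u\in L^\infty$), giving $\partial_\varepsilon[-F(x,u+\varepsilon\varphi)]=-f(x,u+\varepsilon\varphi)\varphi$ and $\partial_\varepsilon^2[-F]=-f_u(x,u+\varepsilon\varphi)\varphi^2$, both bounded on the compact support of $\varphi$ by $\|f\|_\infty$ and $\|f_u\|_\infty$ times $|\varphi|$ or $\varphi^2$, hence dominated convergence applies. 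For the degenerate part, the first $\varepsilon$-derivative is $\alpha(x)|\nabla u+\varepsilon\nabla\varphi|^{p(x)-2}(\nabla u+\varepsilon\nabla\varphi)\cdot\nabla\varphi$, which is continuous in $\varepsilon$ and, on $\mathrm{supp}\,\varphi$, bounded by $\alpha(x)(|\nabla u|+|\nabla\varphi|)^{p(x)-1}|\nabla\varphi|$; since $\nabla u\in L^\infty$, $\alpha\in L^\infty$, $p\in L^\infty$, this is a bounded function on $U$ and we may differentiate under the integral. Evaluating $g'(0)=0$ yields exactly \eqref{eq1}.

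For stability, I would differentiate once more. The second $\varepsilon$-derivative of the degenerate integrand, evaluated at $\varepsilon=0$, is precisely $\langle\mathcal{B}(x,\nabla u)\nabla\varphi,\nabla\varphi\rangle$ by the computation recorded in \eqref{6bis}. The one subtlety is the set $\{\nabla u=0\}$, where $\mathcal{B}$ was extended by $0$ (when $p(x)>2$); here the map $\varepsilon\mapsto\alpha(x)|\nabla u+\varepsilon\nabla\varphi|^{p(x)}/p(x)$ is still convex in $\varepsilon$ (it is a convex function of the vector $\nabla u+\varepsilon\nabla\varphi$ precomposed with an affine map), so its second derivative, wherever it exists, is nonnegative and integrable against the $L^\infty$ bounds above; combining with the $-f_u\varphi^2$ term from the potential and using $g''(0)\ge 0$ gives \eqref{sta1}. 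I expect the main obstacle to be the rigorous justification of differentiating the degenerate term twice near $\{\nabla u=0\}$ when $p(x)>2$: there the second derivative of $t\mapsto|v+tw|^{p(x)}$ in $t$ degenerates, but it remains bounded (by roughly $(p(x)-1)|\nabla u+\varepsilon\nabla\varphi|^{p(x)-2}|\nabla\varphi|^2$, which is bounded on $\mathrm{supp}\,\varphi$), so one can pass a difference quotient argument or invoke convexity plus dominated convergence rather than pointwise second differentiability. The potential term, by contrast, is entirely routine given $f_u\in L^\infty$.
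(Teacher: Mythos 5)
Your proof takes exactly the same route as the paper's: differentiate $\mathcal{E}_U(u+\varepsilon\varphi)$ in $\varepsilon$, obtain \eqref{eq1} from the vanishing of the first variation and \eqref{sta1} from the nonnegativity of the second, invoking \eqref{6bis} to identify the second variation of the degenerate part with $\langle\mathcal{B}(x,\nabla u)\nabla\varphi,\nabla\varphi\rangle$. You supply the dominated-convergence and convexity details near $\{\nabla u=0\}$ that the paper leaves implicit, but the argument is the same.
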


\begin{proof}
We compute the first and second variation of
${\mathcal{E}}_\Omega$ with 
$U$
a bounded open subset of $\Omega$. We have
\begin{eqnarray*}
0&=&\left.\frac{d}{d\eps}{\mathcal{E}}_U(u+\eps\phi)\right|_{\epsilon=0}\\
&=&\int_\Omega \alpha(x)|\nabla u|^{p(x)-2}\nabla u \cdot \nabla \phi
-f(x,u) \phi \, d\X
\end{eqnarray*}
and 
\begin{eqnarray*}
0&\leq&\frac{d^2}{d\epsilon^2}
\left. {\mathcal{E}}_U (u+\epsilon \phi)\right|_{\epsilon=0}\\
&=& \int_{\Omega} 
<\B(x,\nabla u)\nabla\phi,\nabla\phi>
- f_{u}(x,u)\phi^2\, d\X,
\end{eqnarray*}
due to \eqref{6bis}.
\end{proof}

We now recall that monotonicity in one direction
implies stability:

\begin{lemma}\label{79}
Let $u$ be a weak solution of \eqref{eq1-provv} in~$\Omega$
and suppose that $\partial_{y_1} u>0$ in~$\Omega$.

Then, $u$ is stable, that is \eqref{sta1} holds.
\end{lemma}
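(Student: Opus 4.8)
The plan is to exploit the translation invariance of the equation in the $y$ variables together with the monotonicity hypothesis $\partial_{y_1}u>0$. The key observation is that, by differentiating the weak formulation \eqref{eq1-provv} in the direction $y_1$, the function $v:=\partial_{y_1}u$ is a (weak) solution of the linearized equation associated to the operator $\mathcal{B}$; that is, for suitable test functions $\psi$ one has
$$
\int_\Omega <\mathcal{B}(x,\nabla u)\nabla v,\nabla\psi> = \int_\Omega f_u(x,u)\,v\,\psi.
$$
This is precisely \eqref{a711aa} with $j=1$, which by \eqref{dense} is available for all $\psi\in W^{1,2}_0(\Omega)$. Since $v>0$ in $\Omega$, we may legitimately insert the test function $\psi:=\xi^2/v$ for an arbitrary $\xi\in C^\infty_0(\Omega)$: this is the standard ``ground state substitution'' (Allegretto--Piepenbrink / Berestycki--Caffarelli--Nirenberg trick), and it is admissible because $v$ is bounded away from zero on compact subsets of $\Omega$ (as $v\in C^1$ by \eqref{REGO}) and $\nabla v\in L^2_{\rm loc}$.

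First I would carry out the substitution and expand $\nabla\psi = 2\xi\nabla\xi/v - \xi^2\nabla v/v^2$. Plugging this in and using that $\mathcal{B}(x,\nabla u)$ is a symmetric nonnegative-definite matrix, the cross terms combine into a perfect square: writing $B:=\mathcal{B}(x,\nabla u)$, one gets
$$
\int_\Omega f_u(x,u)\,\xi^2 = \int_\Omega \Big[\frac{2\xi}{v}<B\nabla v,\nabla\xi> - \frac{\xi^2}{v^2}<B\nabla v,\nabla v>\Big],
$$
and hence
$$
\int_\Omega <B\nabla\xi,\nabla\xi> - f_u(x,u)\,\xi^2
= \int_\Omega <B\nabla\xi,\nabla\xi> - \frac{2\xi}{v}<B\nabla v,\nabla\xi> + \frac{\xi^2}{v^2}<B\nabla v,\nabla v>.
$$
By the Cauchy--Schwarz inequality for the nonnegative bilinear form $<B\cdot,\cdot>$ (pointwise), the right-hand side equals $\int_\Omega <B\,w,w>$ with $w:=\nabla\xi - (\xi/v)\nabla v$, which is $\ge 0$. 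This yields exactly \eqref{sta1}, i.e. stability.

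The main obstacle — really the only point requiring care — is justifying the substitution $\psi=\xi^2/v$ rigorously: one must check that $\psi\in W^{1,2}_0(\Omega)$ with support in a fixed compact set, which follows from $\xi\in C^\infty_0$, $v\in C^1(\Omega)$ with $\inf_K v>0$ on $K:=\mathrm{supp}\,\xi$, and $\nabla v=\nabla\partial_{y_1}u\in L^2_{\rm loc}$ from \eqref{REGO}; the chain rule $\nabla\psi = 2\xi v^{-1}\nabla\xi - \xi^2 v^{-2}\nabla v$ then holds in $W^{1,2}$. If one prefers to avoid even this mild concern, an alternative is to test \eqref{a711aa} with $\psi:=\xi^2/(v+\delta)$ for $\delta>0$, obtain the inequality with $v$ replaced by $v+\delta$ and an extra harmless term, and let $\delta\to 0^+$ using dominated convergence together with the $L^\infty_{\rm loc}$ bounds in \eqref{REGO}. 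Everything else is the algebraic identity above plus the pointwise Cauchy--Schwarz for $\mathcal{B}$, which is immediate since $\mathcal{B}(x,\eta)$ is nonnegative definite by \eqref{BDE}.
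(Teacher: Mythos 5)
Your proof is correct and follows essentially the same route as the paper: test \eqref{a711aa} with $j=1$ and $\psi=\xi^2/u_{y_1}$, then control the resulting cross term using the pointwise nonnegativity of $\mathcal{B}$. The only cosmetic difference is that you complete the square to write the residual as $\int <\mathcal{B}w,w>$ with $w=\nabla\xi-(\xi/u_{y_1})\nabla u_{y_1}$, whereas the paper bounds the cross term via $2<\mathcal{B}v,w>\le <\mathcal{B}v,v>+<\mathcal{B}w,w>$; these are the same estimate.
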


\begin{proof}
Fix $\xi\in C^\infty_0(\Omega)$. 
In view of \eqref{dense}, we may use \eqref{a711aa} for $j=1$ and   
$\psi:=\displaystyle\frac{\xi^2}{u_{y_1}}\in W^{1,2}_0(\Omega)$. 

This yields that
\begin{eqnarray*}
&&  \int_\Omega f_u(x,u) \xi^2 \,dX\\
&=&
\int_\Omega f_u(x,u) u_{y_1} \psi \,dX \\&=&
\int_\Omega \Big[
\frac{2 \xi}{u_{y_1}} < \B(x, \nabla u) \nabla u_{y_1}, \nabla \xi>
-\frac{\xi^2}{(u_{y_1})^2}  <\B(x, \nabla u) \nabla u_{y_1}, \nabla 
u_{y_1}>
\Big]\,dX \\
&\leq& \int_\Omega <\B(x, \nabla u) \nabla \xi, \nabla \xi>
\,dX,\end{eqnarray*}
where in the last equation we used 
that \begin{equation*}
2 <\B(x, \nabla u) v,w >
\;\; \leq \;\; <\B(x, \nabla u) v,v >+<\B(x, \nabla u) w,w >, \qquad
\forall v,w\in \R^n.
\qedhere\end{equation*}
\end{proof}

We now give a sufficient condition for~\eqref{en:bound}
to hold:

\begin{lemma}\label{12}
Let $t_o:=-1$ in~\eqref{dF}.

Assume that~$F(x,t)\le0$
for any $x\in\R^m$ and any~$t\in\R$, $F(x,-1)=F(x,+1)=0$, and 
\begin{equation}\label{F:bound}
\sup_{
{x \in \R^m}\atop{|t|\leq 1}}
\big| F(x,t)\big|<+\infty.
\end{equation}

Let~$u\in W^{1,\infty} (\R^n, [-1,1])$ be a local minimum
in the whole~$\R^n$.

Then, there exists~$C>0$ such that
\begin{equation}\label{787}
\int_{B_R}\alpha(x) |\nabla u(X)|^{p(x)}\, d\X \le CR^{n-1},
\end{equation}
for any~$R>1$.

In particular, if also~$n\le 3$, then~\eqref{en:bound}
holds.
\end{lemma}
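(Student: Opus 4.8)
The plan is to prove the energy bound \eqref{787} by a standard comparison-function argument, exploiting the minimality of $u$ against a competitor that is forced to be the constant $+1$ (or $-1$) on an inner ball, so that its gradient is supported only in a thin annular shell. First I would fix $R>1$ and consider a cutoff $\chi\in C^\infty_0(B_R)$ with $\chi=1$ on $B_{R-1}$, $0\le\chi\le1$, and $|\nabla\chi|\le C$. Since $u$ takes values in $[-1,1]$ and $\partial_{y_1}u>0$ is \emph{not} assumed here, I would instead use the natural competitor $v:=\chi\,u+(1-\chi)\cdot(\pm1)$; more precisely, pick the sign so that $v$ still lies in $[-1,1]$ (taking $v:=(1-\chi)+\chi u$ works since $u\le 1$, giving $v\in[-1,1]$ as long as $u\ge -1$, which holds). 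Then $v\equiv 1$ outside $B_R$, $v=u$ on $B_{R-1}$, and $v-u=(1-\chi)(1-u)$ is an admissible perturbation supported in $B_R$.

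Next I would write down the minimality inequality ${\mathcal{E}}_{B_R}(u)\le{\mathcal{E}}_{B_R}(v)$ from the definition of local minimizer (valid by hypothesis). Using $F\le 0$ everywhere, the term $-\int F(x,u)$ on the left is $\ge 0$, so it can be dropped to the right side's advantage; on the right, $F(x,v)\le 0$ as well, so $-\int F(x,v)\le 0$ can be thrown away. Rearranging,
\begin{equation*}
\int_{B_R}\frac{\alpha(x)|\nabla u|^{p(x)}}{p(x)}\,dX
\;\le\;\int_{B_R}\frac{\alpha(x)|\nabla v|^{p(x)}}{p(x)}\,dX
+\int_{B_R}\big(F(x,u)-F(x,v)\big)\cdots
\end{equation*}
wait — more carefully: ${\mathcal{E}}_{B_R}(u)\le{\mathcal{E}}_{B_R}(v)$ gives
$\int\frac{\alpha|\nabla u|^{p}}{p}-\int F(x,u)\le\int\frac{\alpha|\nabla v|^{p}}{p}-\int F(x,v)$, hence
$\int\frac{\alpha|\nabla u|^{p}}{p}\le\int\frac{\alpha|\nabla v|^{p}}{p}+\int F(x,u)-\int F(x,v)$. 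Now $F(x,u)\le 0$ so the $+\int F(x,u)$ term is $\le 0$ and is dropped; $-\int F(x,v)\le |\,\int F(x,v)\,|\le \sup|F|\cdot|B_R|$, but $v=u$ on $B_{R-1}$ and $v\in[-1,1]$ throughout, while $F(x,\pm1)=0$ — still the crude bound $\int_{B_R\setminus B_{R-1}}|F(x,v)|\le \|F\|_\infty\,|B_R\setminus B_{R-1}|\le C R^{n-1}$ suffices. Since $p(x)\ge 2$ and $\inf\alpha>0$, the left side controls $c\int_{B_R}\alpha|\nabla u|^{p}$ from below up to constants. It remains to bound $\int_{B_R}\alpha|\nabla v|^{p(x)}$.

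The key computation is the gradient of the competitor: $\nabla v=\chi\nabla u+(1-u)\nabla(1-\chi)=\chi\nabla u-(1-u)\nabla\chi$, which is supported in the shell $B_R\setminus B_{R-1}$ (on $B_{R-1}$, $\chi\equiv1$ so $\nabla v=\nabla u$, but there $v=u$ exactly — hmm, no: on $B_{R-1}$, $v=u$, fine, but then $\int_{B_R}\alpha|\nabla v|^p$ includes $\int_{B_{R-1}}\alpha|\nabla u|^p$, which we cannot afford). This is the main obstacle: the naive competitor keeps $u$'s own gradient on the bulk. The fix, which is the standard one, is to compare instead on the annulus only, or to choose the competitor $v\equiv 1$ on all of $B_R$ — but that is not a valid perturbation of $u$ unless $u\equiv 1$ on $\partial B_R$. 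The correct route is: compare $u$ with $v:=\max(u,1-2\,\mathrm{dist}(X,\partial B_R)/\,?)$... Actually the clean way, matching page 24 of \cite{Simon}, is to take $v\equiv 1$ on $B_{R-1}$ and interpolate linearly in the shell: $v=1$ on $B_{R-1}$, $v=u$ on $\partial B_R$, with $|\nabla v|\le |\nabla u|+C$ on the shell and $\nabla v=0$ on $B_{R-1}$. Then $\int_{B_R}\alpha|\nabla v|^{p}\le\int_{B_R\setminus B_{R-1}}\alpha\,(|\nabla u|+C)^{p}\le C\int_{B_R\setminus B_{R-1}}\alpha|\nabla u|^{p}+CR^{n-1}$, and crucially the left side $\int_{B_R}\alpha|\nabla u|^p$ minus the shell term $\int_{B_R\setminus B_{R-1}}\alpha|\nabla u|^p$ leaves $\int_{B_{R-1}}\alpha|\nabla u|^p$, while the shell contributions cancel to lowest order; iterating this over $R, R-1, R-2,\dots$ (a telescoping/discrete Gronwall argument, exactly the device in \cite{GT}) yields $\int_{B_R}\alpha|\nabla u|^p\le CR^{n-1}$.

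Finally, for the last sentence: if $n\le 3$ then $R^{n-1}\le R^2$ for $R\ge 1$, so \eqref{787} immediately gives $\int_{B_R}\alpha|\nabla u|^{p}\le CR^{n-1}\le CR^2$, which is \eqref{en:bound} with $C_o:=C$ (enlarging $C_o$ to be $\ge1$). I expect the telescoping estimate over dyadic or unit-spaced radii to be the only genuinely delicate point; everything else is bookkeeping with $\inf\alpha>0$, $\|\alpha\|_\infty<\infty$, $\|F\|_\infty<\infty$, $p\ge 2$, and $u\in W^{1,\infty}$ giving a uniform bound on $|\nabla u|$ so that $(|\nabla u|+C)^{p(x)}\le C'|\nabla u|^{p(x)}+C'$ uniformly in $x$.
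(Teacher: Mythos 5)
Your high-level plan matches the paper's: compare $u$ against a competitor forced to equal a constant on the inner ball $B_{R-1}$, so that the energy of the competitor lives only in the unit-width shell. But the execution has a real gap.

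The central issue is the iteration. You arrive at an estimate of the form
\begin{equation*}
\int_{B_R}\alpha|\nabla u|^{p(x)}\,dX \;\le\; C\int_{B_R\setminus B_{R-1}}\alpha|\nabla u|^{p(x)}\,dX + CR^{n-1},
\end{equation*}
and propose to close it by a ``telescoping/discrete Gronwall'' argument over $R, R-1, R-2,\dots$. Setting $a(R):=\int_{B_R}\alpha|\nabla u|^{p(x)}$, this inequality reads $a(R)\le C\big(a(R)-a(R-1)\big)+CR^{n-1}$, i.e.\ $a(R-1)\le\frac{C-1}{C}a(R)+R^{n-1}$; this bounds $a(R-1)$ in terms of $a(R)$, which is the \emph{wrong} direction for deducing growth of $a(R)$ as $R\to\infty$, and the hole-filling device from \cite{GT} does not apply as you invoke it. More importantly, no iteration is needed at all: the hypothesis $u\in W^{1,\infty}(\R^n)$, which you cite only in your closing ``bookkeeping'' remark, is exactly what collapses the argument. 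Since $|\nabla u|\le M$ globally, $p\in L^\infty$, and $\alpha\in L^\infty$, one has directly $\int_{B_R\setminus B_{R-1}}\alpha|\nabla u|^{p(x)}\,dX\le C|B_R\setminus B_{R-1}|\le CR^{n-1}$, so the shell term on the right is already of the claimed size and the proof terminates in one step. Your last paragraph and your penultimate paragraph are thus in tension with each other.

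A secondary point is the choice of competitor. Your first attempt $v=\chi u+(1-\chi)$ with $\chi=1$ on $B_{R-1}$ equals $u$, not a constant, on $B_{R-1}$, so its gradient does not vanish there; you notice this and pivot, but the ``linear interpolation'' replacement is never pinned down, and the bound $|\nabla v|\le|\nabla u|+C$ is asserted rather than derived. The paper's construction is cleaner: take $h\in C^\infty(B_R)$ with $h\equiv-1$ on $B_{R-1}$, $h\equiv 1$ on $\partial B_R$, $|\nabla h|\le 4$, and set $v:=\min\{u,h\}$. Since $u\ge -1$, one gets $v\equiv -1$ on $B_{R-1}$ (so both the gradient term and $F(x,v)=F(x,-1)=0$ vanish there), $v=u$ on $\partial B_R$ (since $h=1\ge u$), and a.e.\ $|\nabla v|\le\max\{|\nabla u|,|\nabla h|\}$, bounded by the $W^{1,\infty}$ hypothesis. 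Then ${\mathcal{E}}_{B_R}(u)\le{\mathcal{E}}_{B_R}(v)$, the left side dominates $c\int_{B_R}\alpha|\nabla u|^{p(x)}$ since $F\le 0$ and $\inf 1/p>0$, and the right side is an integral over the shell only, bounded by $CR^{n-1}$ using $\|F\|_{L^\infty(\R^m\times[-1,1])}<\infty$ and the uniform gradient bound. The final implication ($n\le 3\Rightarrow R^{n-1}\le R^2$) you state correctly.
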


\begin{proof} 
We take~$R>1$, $h\in C^\infty(B_R)$,
with $h=-1$ in $B_{R-1}$, $h=1$ on~$\partial B_R$
and~$|\nabla h|\le 4$, and we set~$v(x):=\min\{u(x),h(x)\}$.

Then, since~$u$ is minimal, we have that
\begin{eqnarray*}
&& \inf_{x\in\R^m}\frac{1}{p(x)} 
\int_{B_R} \alpha(x) |\nabla u|^{p(x)}\,d\X
\le
{\mathcal{E}}_{B_R} (u)\le {\mathcal{E}}_{B_R}(v)
\\ &&\qquad\qquad=\int_{B_R\setminus B_{R-1}} 
\left(\frac{1}{p(x)} \alpha(x) |\nabla v|^{p(x)}
-F(x,v)\right)\, d\X
\\
&&\qquad\qquad\le
\int_{B_R\setminus B_{R-1}} \left[
\sup_{x\in\R^m}
\frac{1}{p(x)} \sup_{x\in\R^m}\alpha(x) \Big(|\nabla u|^{p(x)}
+|\nabla h|^{p(x)}\Big) + \sup_{\R^m\times [-1,1]} |F|
\right]\,d\X,
\end{eqnarray*}
which implies~\eqref{787}. 
\end{proof}

We would like to remark that
the nonlinearities of the type in \eqref{ORI} 
satisfy the assumptions
of Lemma~\ref{12}. The following is another criterion
for obtaining \eqref{en:bound}:

\begin{lemma}\label{CRY}
Suppose that $p(x)=Ãp$ is constant.

Let $u$ be a bounded
weak solution of \eqref{eq1-provv} in the whole $\R^n$.

Let
$$ I:=\big[ -\|u\|_{L^\infty(\R^m)},
\|u\|_{L^\infty(\R^m)}
\big].$$
Suppose that there exist $C_0>0$ and~$\sigma
\in [1,2]$ such that
\begin{equation}\label{chig}
\int_{B_R\subset \R^m} \left[\sup_{r\in I}
|f(x,r)|\right]
\,dx \le C_0 R^{m-\sigma},
\end{equation}
for any $R\ge C_0$.

Then, there exists $C_1>0$ for which
\begin{equation}\label{DRY}
\int_{B_R\subset\R^n}
\alpha(x) |\nabla u(X)|^{p}\,dX \le C_1 R^{n-\sigma},
\end{equation}
for any $R\ge C_1$.

In particular,
\eqref{en:bound} holds
\begin{itemize}
\item[{(P1)}] either if $n\le 3$
and $f(x,r)
=0$ for any $(x,r)=(x_1,\dots,x_m,r)\in\R^m
\times\R$ such that $|x_1|\ge C_2$,
\item[{(P2)}] or if~$m\ge 2$, $n\le4$
and
$f(x,r)
=0$ for any $(x,r)=(x_1,\dots,x_m,r)\in\R^m
\times\R$ such that $|x_1|+|x_2|\ge C_2$,
\end{itemize}
for some~$C_2>0$.
\end{lemma}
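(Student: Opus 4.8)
The plan is to first prove the energy estimate \eqref{DRY} by a Caccioppoli-type argument, and then derive the two explicit sufficient conditions (P1) and (P2) by elementary integral geometry. For \eqref{DRY}, I would test the weak formulation \eqref{eq1} against $\psi := (u-c_R)\chi_R^p$, where $\chi_R$ is a standard cutoff, equal to $1$ on $B_R$, supported in $B_{2R}$, with $|\nabla \chi_R|\le C/R$, and where $c_R$ is any constant in the range $I$ (for instance $c_R=u(0)$, or any value in $I$; boundedness of $u$ is what is used here, not a precise choice). Expanding the divergence term and using Young's inequality to absorb the cross term $p\,\alpha(x)|\nabla u|^{p-1}|u-c_R|\,\chi_R^{p-1}|\nabla\chi_R|$ into $\tfrac12\int \alpha(x)|\nabla u|^p\chi_R^p$ plus a term of order $R^{-p}\int_{B_{2R}}\alpha(x)|u-c_R|^p$, and bounding the right-hand side $\int f(x,u)\psi$ by $\|u\|_{L^\infty}\int_{B_{2R}\cap\R^m}[\sup_{r\in I}|f(x,r)|]\,dx$ integrated trivially over the $y$-variable on $B_{2R}$ (which contributes a factor $\lesssim R^{n-m}$), one gets
\begin{equation*}
\int_{B_R} \alpha(x)|\nabla u|^p\,dX \;\le\; \frac{C}{R^p}\,\|u\|_{L^\infty}^p\,|B_{2R}| + C\,\|u\|_{L^\infty}\,R^{n-m}\int_{B_{2R}^{\R^m}}\Big[\sup_{r\in I}|f(x,r)|\Big]\,dx .
\end{equation*}
Since $p\ge 2$, the first term is $O(R^{n-p})=O(R^{n-2})\le O(R^{n-\sigma})$; by hypothesis \eqref{chig} the second term is $O(R^{n-m}\cdot R^{m-\sigma})=O(R^{n-\sigma})$. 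This yields \eqref{DRY} for $R\ge C_1$. I expect the only delicate point here to be the density/approximation step allowing $\psi=(u-c_R)\chi_R^p$ as a test function: under \eqref{REGO} one has $\nabla u\in L^\infty_{\rm loc}$, so $\psi\in W^{1,p}$ with compact support and the test is legitimate after mollification, exactly as in the density arguments \eqref{dense}–\eqref{dense2} already carried out in the paper.

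The constant $p$ hypothesis is used precisely so that $\alpha(x)|\nabla u|^p$ is the correct density in the Caccioppoli inequality and so that the exponent bookkeeping above is uniform; with $p(x)$ non-constant the power in the cross term would vary with $x$ and the absorption argument would require the additional $\inf\alpha>0$, $p(x)\ge2$ structure but also a uniform upper bound on $p(x)$, which is available but makes the statement messier — hence the simplifying assumption. Once \eqref{DRY} is established, note that $\sigma\in[1,2]$ guarantees $R^{n-\sigma}\le R^{n-1}$, but this alone does not give \eqref{en:bound}, which requires the exponent $2$; so \eqref{en:bound} follows from \eqref{DRY} exactly when $n-\sigma\le 2$, i.e. when $n\le 2+\sigma$. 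Since we may take $\sigma$ as large as $2$ whenever \eqref{chig} holds with $\sigma=2$, this is the mechanism behind the dimension restrictions in (P1) and (P2).

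For (P1): if $f(x,r)=0$ whenever $|x_1|\ge C_2$, then $\sup_{r\in I}|f(x,r)|$ is supported in the slab $\{|x_1|\le C_2\}$, so for $R\ge C_2$ one has $\int_{B_R\subset\R^m}[\sup_{r\in I}|f(x,r)|]\,dx\le (\sup_{|x_1|\le C_2,\ r\in I}|f|)\cdot |\{|x_1|\le C_2\}\cap B_R|\le C\,R^{m-1}$, which is \eqref{chig} with $\sigma=1$; but then \eqref{DRY} gives $\int_{B_R}\alpha(x)|\nabla u|^p\le C R^{n-1}\le C R^2$ precisely when $n\le 3$. For (P2): if $m\ge2$ and $f$ vanishes for $|x_1|+|x_2|\ge C_2$, the support of $\sup_{r\in I}|f(x,r)|$ lies in a set of the form $\{|x_1|+|x_2|\le C_2\}$, whose intersection with $B_R\subset\R^m$ has measure $\le C\,R^{m-2}$, giving \eqref{chig} with $\sigma=2$; then \eqref{DRY} reads $\int_{B_R}\alpha(x)|\nabla u|^p\le C R^{n-2}$, which is $\le C R^2$ when $n\le4$. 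In both cases one also uses that a bounded weak solution is, by Lemma \ref{79pre} or directly, eligible for the Caccioppoli estimate, and that \eqref{en:bound} as stated only requires the bound for $R\ge C_o$, so enlarging $C_1,C_2$ absorbs the finitely many small-$R$ discrepancies. The main obstacle is thus not conceptual but the careful absorption in the Caccioppoli step and tracking that every constant depends only on $n$, $p$, $\inf\alpha$, $\sup\alpha$, $\|u\|_{L^\infty}$ and $C_0$, so that it is genuinely uniform in $R$.
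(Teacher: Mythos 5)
Your proof is correct and follows essentially the same route as the paper: a Caccioppoli-type argument testing the weak formulation against $u$ (you use $u-c_R$, a cosmetic shift) times a $p$-th power cutoff, absorbing the cross term by Young's inequality, bounding the source term via \eqref{chig} and Fubini to get $O(R^{n-\sigma})$, and noting $p\ge 2\ge\sigma$ kills the $R^{n-p}$ remainder; the derivation of (P1)/(P2) by taking $\sigma=1,2$ respectively is also the same. The only differences are presentational — the paper uses the test function $\tau^p u$ directly rather than subtracting a constant, and records the scaled Young step as a separate display — neither of which changes the substance.
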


\begin{proof}
The last claim
plainly follows from \eqref{DRY} (taking~$\sigma:=1$
in case~(P1) holds and~$\sigma:=2$
in case~(P2) holds).

Let us now prove
\eqref{DRY}.

For this, we define
$$ M:= 1+
\|u\|_{L^\infty (\R^n)}+
\|a\|_{L^\infty (\R^m)}+
\sup_{ {x\in\R^m}\atop{|r|\le \|u\|_{L^\infty (\R^n)} }}
|f(x,r)|.$$
We take $R\ge \max\{C_0, 1\}$
and we choose $\tau\in C^\infty_0 (B_{2R}, [0,1])$,
with $\tau=1$ in $B_R$ and $|\nabla\tau|\le 4/R$.

We also observe that, by
a scaled Young inequality,
\begin{equation}\label{scaled}
\begin{split}
& M p \alpha(x) \tau^{p-1} |\nabla u|^{p-1}\,
|\nabla \tau|
=
\big((\alpha(x))^{(p-1)/p}\tau^{p-1}
|\nabla u|^{p-1}
\big)\, \big(Mp (\alpha(x))^{1/p}
|\nabla \tau|\big)
\\ &\qquad\le
\frac12
\big((\alpha(x))^{(p-1)/p}\tau^{p-1}
|\nabla u|^{p-1}
\big)^{p/(p-1)}+C_3
\big((\alpha(x))^{1/p}
|\nabla \tau|\big)^p\\ &\qquad
=
\frac12
\alpha(x) \tau^{p}
|\nabla u|^{p}
+C_3 \alpha(x)
|\nabla \tau|^p,
\end{split}
\end{equation}
for a suitable $C_3>0$.

Then, using \eqref{eq1} and \eqref{scaled},
\begin{eqnarray*}
&& \int_{B_{2R}} \alpha(x) \tau^{p} |\nabla u|^{p}\,dX \\
&=&
\int_{B_{2R}} \alpha(x)
|\nabla u|^{p-2}\nabla u\cdot\nabla(\tau^{p} u)
- p\alpha(x) u \tau^{p-1}|\nabla u|^{p-2}
\nabla u
\cdot\nabla \tau\,dX \\&\le&
\int_{B_{2R}} \big| f(x,u) \tau^{p} u
\big|+M p \alpha(x) \tau^{p-1} |\nabla u|^{p-1}\,
|\nabla \tau|\,dX\\
&\le& M \int_{B_{2R}} \left[
\sup_{r\in I}
|f(x,r)|\right]
\,dX\\&&\qquad+\frac{1}{2}\int_{B_{2R}}\alpha(x)
\tau^{p} |\nabla u|^{p}\,dX
\\ &&\qquad
+C_3\int_{B_{2R}} \alpha(x)
|\nabla\tau|^{p}
\,dX.\end{eqnarray*}

This and \eqref{chig}
give that
\begin{eqnarray*}
\frac12 
\int\limits_{B_R\subset \R^n }
\alpha(x) |\nabla u|^{p}\,dX
&\le&
\frac12
\int\limits_{B_{2R}
\subset \R^n
} \alpha(x) \tau^{p} |\nabla u|^{p}\,dX
\\
&\le&
M \int\limits_{B_{2R}
\subset \R^{n-m}
} \left\{ \int\limits_{B_{2R}
\subset \R^m
}\left[
\sup_{r\in I} |f(x,r)|\right]\,dx\right\} \,dy
\\ &&\qquad
+C_3 \int\limits_{B_{2R}
\subset \R^n
} 
\alpha(x) |\nabla\tau|^{p}\,dX
\\ &\le&
C_0 M \int\limits_{B_{2R}
\subset \R^{n-m}
} R^{m-\sigma}
\,dy
+C_4 \int\limits_{B_R
\subset \R^n
}\frac{1}{R^{p}}\,dX\\
&=&
C_5 R^{m-\sigma} R^{n-m}
+C_6 R^{n-p},
\end{eqnarray*}
for suitable $C_4$, $C_5$, $C_6>0$.

This completes the proof of \eqref{DRY}.
\end{proof}

\section{An explicit example}

We would like to point out that
it is very easy to construct global, bounded, smooth
solutions of
\eqref{eq1-provv}.

For this, we take $\beta\in C^\infty(\R^m)\cap L^\infty(\R^m)$,
with
\begin{equation}\label{B+}
\inf_{\R^m}\beta>0.\end{equation}
Let also $\gamma\in 
C^\infty(\R)\cap L^\infty(\R)$. Assume that $\gamma$
is strictly increasing and let $\Gamma$ its inverse,
that is
\begin{equation}\label{G-} \Gamma\big( \gamma(t)\big)=t\qquad
{\mbox{ for any $t\in\R$.}}
\end{equation}

We fix $\omega\in {\rm S}^{n-m-1}$, and define
$$ u(x,y):= \beta(x)\gamma(\omega\cdot y).$$
We also define $g:\R^m\times \R$ to be
$$ g(x, \omega\cdot y):=
-{\rm div}\, \big(
\alpha(\x) |\nabla u(\X)|^{p(x)-2}\nabla u(\X)\big).$$
Also, for any $x\in\R^m$ and any $r\in\R$,
we set
$$ f(x,r):= g\Big( x, \Gamma \big(r/\beta(x)\big)\Big).$$
Notice that this definition is well posed, due to \eqref{B+}.

Then, recalling \eqref{G-}, it is easy to check
that $u$ is a solution of
\eqref{eq1-provv}.

\end{appendices}
\bibliographystyle{amsplain}
\bibliography{bibliofile}

\end{document}